\documentclass[11pt,reqno]{amsart}
\usepackage{amsaddr}
\usepackage[pdftex]{graphicx}
\usepackage{pgfplots}
\usepackage{orcidlink}
\usepackage{geometry}
\usepackage{stix}
\usepackage{subcaption}
\usepackage{caption}
\usepackage{amsmath,amsthm}
\usepackage{cite}
\usepackage{hyperref}
\hypersetup{
    colorlinks=true, 
    linktoc=all,     
    allcolors=blue,  
}
\usepackage{url}
\newtheorem{theorem}{Theorem}
\newtheorem{lemma}{Lemma}
\newtheorem{proposition}{Proposition}
\newtheorem{corollary}{Corollary}

\theoremstyle{definition}
\newtheorem{definition}{Definition}
\newtheorem{example}{Example}

\pgfplotsset{compat=1.18}
\definecolor{wewdxt}{rgb}{0.45,0.45,0.45}
\definecolor{uuuuuu}{rgb}{0.3,0.3,0.3}
\definecolor{zzttqq}{rgb}{0.6,0.2,0.}
\definecolor{qqwwtt}{rgb}{0.,0.4,0.2}
\definecolor{xfqqff}{rgb}{0.5,0.,1.}
\definecolor{yqyqyq}{rgb}{0.5,0.5,0.5}
\definecolor{xdxdff}{rgb}{0.5,0.5,1.}
\definecolor{ududff}{rgb}{0.3,0.3,1.}
\definecolor{qqwuqq}{rgb}{0.,0.4,0.}

\begin{document}
\pagenumbering{arabic}

\title[An Affine Approach to Metric Invariants of Poncelet Polygons]{An Affine Approach to Metric Invariants of Poncelet Polygons}
\author{Mohammad Hassan Murad\orcidlink{0000-0002-8293-5242}}
\address{Department of Mathematics\\
The University of Texas at Arlington, Arlington, TX, USA}
\email{mohammad.murad2@uta.edu}

\begin{abstract}
We prove that the total area of the power circles associated with an odd $p$-gon inscribed in and circumscribed about a pair of homothetic ellipses remains invariant throughout the  Poncelet family. Our proof is based on a simple affine averaging principle, which also yields several related quadratic invariants, including explicit formulas for the sums of the squared distances from the center to the vertices, to the side midpoints, as well as for the sum of the squared side lengths. As an application, we show that a convex Poncelet pentagon and the corresponding star Poncelet pentagon, both circumscribed about the same inellipse, have equal total power-circle area. These results unify several metric invariants of odd Poncelet polygons within a common affine-geometric framework.
\end{abstract}

\keywords{Power circle; homothetic ellipse; regular polygon}
\subjclass[2020]{Primary: 51M04; Secondary: 51N20, 51N35}


\maketitle{}

\section{Introduction}\label{sec:intro}
\noindent
We begin with the following definition.

\begin{definition}\label{def:powcirc}
Let $P=A_0A_1\cdots A_{2n}$ be a $(2n+1)$-gon. The circles passing through the vertex $A_k$ and centered at the midpoint $M_k$ of the segment
$\overline{A_{n+k}A_{n+k+1}}$, where the indices are taken modulo $2n+1$, are called the \emph{power circles} of $P$. The segment $\overline{A_kM_k}$ is called the \emph{median} corresponding to the vertex $A_k$, and its length is denoted by $m_k$. See Figure~\ref{fig:powercircle}. In the special case of a triangle, each power circle passes through a vertex and is centered at the midpoint of the opposite side.
\end{definition}

\begin{figure}
    \centering
\begin{tikzpicture}[scale=1.5]
\clip(-2,-3) rectangle (2,1.5);
\draw [line width=1.pt] (-0.4501785990726699,0.8929385359233682)-- (-0.994333036549735,0.10630998271932529);
\draw [line width=1.pt] (-0.994333036549735,0.10630998271932529)-- (-0.5675140507624962,-0.8233637119688618);
\draw [line width=1.pt] (-0.5675140507624962,-0.8233637119688618)-- (0.5110907898893505,-0.8595267328537721);
\draw [line width=1.pt] (0.5110907898893505,-0.8595267328537721)-- (0.9152866061016331,0.4028032133572842);
\draw [line width=1.pt] (0.9152866061016331,0.4028032133572842)-- (-0.4501785990726699,0.8929385359233682);
\draw [line width=1.pt,brown] (-0.02821163043657282,-0.8414452224113169) circle (1.7849770709437935cm);
\draw [line width=1.pt,red] (-0.4501785990726699,0.8929385359233682)-- (-0.02821163043657282,-0.8414452224113169);
\begin{scriptsize}
\draw [fill=wewdxt] (-0.4501785990726699,0.8929385359233682) circle (1.0pt);
\draw[color=wewdxt] (-0.45,1.05) node {$A_0$};
\draw [fill=wewdxt] (-0.994333036549735,0.10630998271932529) circle (1.0pt);
\draw[color=wewdxt] (-1.17,0.13) node {$A_1$};
\draw [fill=wewdxt] (-0.5675140507624962,-0.8233637119688618) circle (1.0pt);
\draw[color=wewdxt] (-0.65,-0.95) node {$A_2$};
\draw [fill=wewdxt] (0.5110907898893505,-0.8595267328537721) circle (1.0pt);
\draw[color=wewdxt] (0.65,-0.95) node {$A_3$};
\draw [fill=wewdxt] (0.9152866061016331,0.4028032133572842) circle (1.0pt);
\draw[color=wewdxt] (1.05,0.35) node {$A_4$};
\draw [fill=brown] (-0.02821163043657282,-0.8414452224113169) circle (1.0pt);
\draw[color=brown] (0,-1) node {$M_0$};
\draw[color=brown] (-1.13,0.8) node {$\mathcal C_0$};
\draw[color=red] (-0.1,0.07) node {$m_0$};
\end{scriptsize}
\end{tikzpicture}
    \caption{The power circle $\mathcal C_0$ corresponding to the vertex $A_0$ of a pentagon $A_0A_1A_2A_3A_4$ is shown in brown. Here $m_0$ denotes the length of the median $\overline{A_0M_0}$.}
    \label{fig:powercircle}
\end{figure}

In a recent paper \cite{Dragovic-Murad2026}, we established the following result concerning the total area of the power circles associated with a family of Poncelet triangles.

\begin{theorem}\label{thm:invareafoc}
Let $\mathcal P$ be a family of triangles inscribed in and circumscribed \, about a pair of homothetic ellipses. Then the sum of the areas of the power circles of a triangle in $\mathcal P$ remains invariant throughout the family.
\end{theorem}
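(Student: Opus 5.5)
The plan is to reduce the whole statement to one affine identity. Write the outer ellipse as $\mathcal E$ with center $O$, and let $T$ be an affine map with $T(O)=0$ that sends $\mathcal E$ to the unit circle. The inner ellipse is homothetic to $\mathcal E$ with the same center; by Poncelet's closure theorem and the triangle case of closure, it has to be concentric, so $T$ sends it to the circle of radius $1/2$. After applying $T$, the Poncelet triangles become exactly the equilateral triangles inscribed in the unit circle. Their vertices are $u_k=e^{i(\theta+2\pi k/3)}$, with $\sum u_k=0$, and these are parametrized by the single angle $\theta$. Write $T^{-1}(x)=Lx$ with $L$ linear. Then the original vertices are $A_k=Lu_k$ (taking $O$ as origin), so $\sum_k A_k=0$.

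The power circle at $A_k$ has radius $m_k=|A_k-M_k|$. Since $M_k=\tfrac12(A_{k+1}+A_{k+2})=-\tfrac12A_k$, we get $m_k=\tfrac32|A_k|$. The total area is therefore $\pi\sum m_k^2=\tfrac{9\pi}{4}\sum_k|Lu_k|^2$. The problem thus reduces to showing that $\sum_{k=0}^{2}|Lu_k|^2$ does not depend on $\theta$.

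This is the affine averaging step. Write $|Lu|^2=u^{\top}Su$ with $S=L^{\top}L$. Then $\sum_k u_k^{\top}Su_k=\operatorname{tr}\bigl(S\sum_k u_ku_k^{\top}\bigr)$. For three unit vectors spaced at angle $2\pi/3$ we have $\sum_k u_ku_k^{\top}=\tfrac32 I$, because the off-diagonal and $\cos2\phi$ terms involve $\sum_k e^{2i(\theta+2\pi k/3)}=0$. So the sum equals $\tfrac32\operatorname{tr}(L^{\top}L)=\tfrac32(a^2+b^2)$, where $a,b$ are the semi-axes of $\mathcal E$. The total area is then $\tfrac{27\pi}{8}(a^2+b^2)$, which is independent of $\theta$.

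The main obstacle is the first step: we have to justify that homothetic ellipses admitting a Poncelet triangle family must be concentric with ratio $1/2$, so that the family really does map to the rotating equilateral triangles. I would argue this as follows. After $T$, the inner conic is a circle, since it is homothetic to the image of $\mathcal E$. A circle-circle Poncelet triangle family must satisfy Chapple–Euler, $R^2-2Rr=d^2$, where $d$ is the distance between the centers, $R=1$ is the circumradius and $r$ the inradius. Homothety alone does not force $d=0$, so this is the point that needs care. For a pair with $d\neq 0$ the averaging identity fails, since the centroid is no longer fixed. I would therefore read the statement as assuming concentric homothetic ellipses, as in \cite{Dragovic-Murad2026}, and state this explicitly. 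Under that reading Chapple–Euler gives $r=1/2$ and the rest of the argument goes through.
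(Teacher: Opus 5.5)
Your proof is correct and is essentially the paper's argument, namely its proof of the odd-$p$ theorem specialized to $p=3$: an affine reduction to the equilateral case, the rotational-symmetry identity $\sum_k u_ku_k^{\top}=\tfrac32 I$, and the averaging step, with your shortcut $M_k=-\tfrac12A_k$ and the trace form $\tfrac32\operatorname{tr}(L^{\top}L)$ standing in for the paper's direct use of median vectors and a diagonal $Q$ (your constant $\tfrac{27\pi}{8}(a^2+b^2)$ matches the paper's $\tfrac{27\pi}{2}(\alpha^2+\beta^2)$, since $a=2\alpha$ and $b=2\beta$). Do delete your opening claim that Poncelet closure forces the homothetic pair to be concentric: as you note at the end, it does not (Chapple--Euler circle pairs with $d\neq0$ are counterexamples, and for them the total area varies), so concentricity must be a hypothesis, as it is in the paper's ``common center $O$''.
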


\begin{figure}
    \centering
    \begin{subfigure}[b]{0.48\textwidth}
    \centering
\begin{tikzpicture}[scale=0.48]
\clip(-8.5,-6.5) rectangle (7,7);
\draw [rotate around={0.:(0.,0.)},line width=1.pt,gray] (0.,0.) ellipse (4.cm and 3.cm);
\draw [rotate around={0.:(0.,0.)},line width=1.pt,gray] (0.,0.) ellipse (2.cm and 1.5cm);
\draw [line width=1.pt] (-0.9082402128286807,2.921642276210079)-- (-2.9195018028770203,-2.0507404608912068);
\draw [line width=1.pt] (-2.9195018028770203,-2.0507404608912068)-- (3.827742015705701,-0.8709018153188719);
\draw [line width=1.pt] (3.827742015705701,-0.8709018153188719)-- (-0.9082402128286807,2.921642276210079);
\draw [line width=1.pt,color=red] (0.4541201064143403,-1.4608211381050393) circle (4.589336685977449cm);
\draw [line width=1.pt,color=qqwuqq] (1.4597509014385102,1.0253702304456036) circle (5.351664342390273cm);
\draw [line width=1.pt,color=blue] (-1.9138710078528505,0.43545090765943617) circle (5.888351004239643cm);
\begin{scriptsize}
\draw[color=gray] (-2.7,2.8) node {$\mathcal D_1$};
\draw[color=gray] (-0.6,0.9) node {$\mathcal D_2$};
\draw [fill=wewdxt] (-0.9082402128286807,2.921642276210079) circle (3.0pt);
\draw[color=wewdxt] (-0.8,3.4) node {$A$};
\draw [fill=wewdxt] (-2.9195018028770203,-2.0507404608912068) circle (3.0pt);
\draw[color=wewdxt] (-3.3,-2.4) node {$B$};
\draw [fill=wewdxt] (3.827742015705701,-0.8709018153188719) circle (3.0pt);
\draw[color=wewdxt] (4.2,-0.9) node {$C$};
\draw [fill=wewdxt] (0.4541201064143403,-1.4608211381050393) circle (3.0pt);
\draw[color=wewdxt] (0.7,-2) node {$M_A$};
\draw [fill=wewdxt] (1.4597509014385102,1.0253702304456036) circle (3.0pt);
\draw[color=wewdxt] (1.8,1.4) node {$M_B$};
\draw [fill=wewdxt] (-1.9138710078528505,0.43545090765943617) circle (3.0pt);
\draw[color=wewdxt] (-2.6,0.6) node {$M_C$};
\draw[color=red] (-2.3,1.6) node {$\mathcal C_A$};
\draw[color=qqwuqq] (-2.7,5.2) node {$\mathcal C_B$};
\draw[color=blue] (-5.25,5.9) node {$\mathcal C_C$};
\end{scriptsize}
\end{tikzpicture}
    \caption{}
    \label{fig:trianglehomell(A)}
\end{subfigure}
\begin{subfigure}[b]{0.48\textwidth}
    \centering
    \begin{tikzpicture}[scale=0.48]
\clip(-6.5,-6.5) rectangle (8.5,6.5);
\draw [rotate around={0.:(0.,0.)},line width=1.pt,gray] (0.,0.) ellipse (4.cm and 3.cm);
\draw [rotate around={0.:(0.,0.)},line width=1.pt,gray] (0.,0.) ellipse (2.cm and 1.5cm);
\draw [line width=1.pt] (1.905946159523796,2.6375452973950093)-- (-3.9985480546635777,-0.08082430440275834);
\draw [line width=1.pt] (-3.9985480546635777,-0.08082430440275834)-- (2.0926018951397825,-2.556720992992251);
\draw [line width=1.pt] (2.0926018951397825,-2.556720992992251)-- (1.905946159523796,2.6375452973950093);
\draw [line width=1.pt,color=red] (-0.9529730797618976,-1.3187726486975047) circle (4.881175156387197cm);
\draw [line width=1.pt,color=qqwuqq] (1.9992740273317893,0.04041215220137917) circle (5.99904725816368cm);
\draw [line width=1.pt,color=blue] (-1.0463009475698908,1.2783604964961255) circle (4.9558612860921825cm);
\begin{scriptsize}
\draw[color=gray] (-2.4,2.9) node {$\mathcal D_1$};
\draw[color=gray] (0,1) node {$\mathcal D_2$};
\draw [fill=wewdxt] (1.905946159523796,2.6375452973950093) circle (3.0pt);
\draw[color=wewdxt] (2.1,3.06) node {$A$};
\draw [fill=wewdxt] (-3.9985480546635777,-0.08082430440275834) circle (3.0pt);
\draw[color=wewdxt] (-4.4,-0.2) node {$B$};
\draw [fill=wewdxt] (2.0926018951397825,-2.556720992992251) circle (3.0pt);
\draw[color=wewdxt] (2.3,-2.9) node {$C$};
\draw [fill=wewdxt] (-0.9529730797618976,-1.3187726486975047) circle (3.0pt);
\draw[color=wewdxt] (-1,-1.85) node {$M_A$};
\draw [fill=wewdxt] (1.9992740273317893,0.04041215220137917) circle (3.0pt);
\draw[color=wewdxt] (2.6,0.1) node {$M_B$};
\draw [fill=wewdxt] (-1.0463009475698908,1.2783604964961255) circle (3.0pt);
\draw[color=wewdxt] (-1.3,1.8) node {$M_C$};
\draw[color=red] (-4.5,2.7) node {$\mathcal C_A$};
\draw[color=qqwuqq] (-2,5.2) node {$\mathcal C_B$};
\draw[color=blue] (-3.93,5.9) node {$\mathcal C_C$};
\end{scriptsize}
\end{tikzpicture}
     \caption{}
    \label{fig:trianglehomell(B)}
    \end{subfigure}
        \caption{In each figure, the triangle $\triangle ABC$ is inscribed in and circumscribed about a pair of homothetic ellipses $\mathcal{D}_1$ and $\mathcal{D}_2$, respectively. The sum of the areas of the power circles of the triangle $\triangle ABC$ in both figures are equal.}
    \label{fig:trianglehomell}
\end{figure}
The existence of such families is guaranteed by Poncelet's Porism, one of the classical results of projective geometry. It asserts that if one polygon can be inscribed in one conic and circumscribed about another, then infinitely many such polygons exist, obtained by continuously moving the initial vertex along the outer conic. Throughout this paper, we refer to such a family as a Poncelet family.

The principal goal of the present paper is to extend this phenomenon to odd Poncelet polygons.

\begin{theorem}\label{thm:homellipse}
Let $p\ge3$ be an odd integer and let $\mathcal P$ denote a family of $p$-gons inscribed in and circumscribed about a pair of homothetic ellipses. Then the sum of the areas of the power circles of a $p$-gon in $\mathcal P$ remains invariant throughout the family.
\end{theorem}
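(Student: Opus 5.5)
Since $\mathcal D_1$ and $\mathcal D_2$ are homothetic, they share a center $O$ and there is an affine map $T$ fixing $O$ that sends both ellipses to concentric circles. Concentric circles admit a $p$-gon Poncelet family only with rotational symmetry, so every member of $\mathcal P$ is the image under $T^{-1}$ of a regular $p$-gon (convex or star) rotated by an angle $\theta$. I will therefore parametrize the family by
$$A_k=O+L\bigl(R\cos(\theta+2\pi jk/p),\,R\sin(\theta+2\pi jk/p)\bigr),\qquad k=0,\dots,p-1,$$
with a fixed linear map $L=T^{-1}$, a fixed radius $R$, and a fixed turning number $j$ coprime to $p$. I must show that $\sum_k \pi m_k^2$ is independent of $\theta$.

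Put $p=2n+1$. The median vector is $M_k-A_k=\tfrac12(A_{n+k}+A_{n+k+1})-A_k=L\,w_k(\theta)$, where
$$w_k(\theta)=R\bigl(\tfrac12(u_{n+k}+u_{n+k+1})-u_k\bigr),\qquad u_k=(\cos\phi_k,\sin\phi_k),\quad \phi_k=\theta+2\pi jk/p.$$
Here $u_{n+k}$ and $u_{n+k+1}$ are obtained from $u_k$ by fixed rotations by $2\pi jn/p$ and $2\pi j(n+1)/p$. So $w_k=R\,N u_k$ for one fixed linear map $N$ (a combination of rotations, hence of the form $aI+bJ$ with $J$ the quarter turn). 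Consequently
$$m_k^2=|LNu_k|^2=u_k^{\top}Q\,u_k,\qquad Q=R^2N^{\top}L^{\top}LN,$$
and $Q$ is a fixed symmetric matrix.

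The key step is the affine averaging principle. For a symmetric $Q$,
$$u^{\top}Qu=\tfrac12\operatorname{tr}Q+\alpha\cos2\phi+\beta\sin2\phi$$
for some constants $\alpha,\beta$. Summing over $k$ gives
$$\sum_k m_k^2=\tfrac p2\operatorname{tr}Q+\operatorname{Re}\Bigl[(\alpha-i\beta)e^{2i\theta}\sum_k e^{4\pi i jk/p}\Bigr].$$
Because $p$ is odd and $\gcd(j,p)=1$, $2j\not\equiv0\pmod p$, so the geometric sum vanishes. Hence $\sum_k m_k^2=\tfrac p2\operatorname{tr}Q$ is constant, and so is the total area $\pi\sum_k m_k^2$.

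The main obstacle is the first step: justifying that every member of the Poncelet family is affinely a rotated regular polygon with the same $R$ and $j$. I would handle it as follows. Under $T$ the inner circle is tangent to every side, so each side subtends a fixed central angle $2\arccos(r/R)$ on the outer circle. Consecutive vertices therefore differ by a fixed rotation, and closure after $p$ steps forces that angle to be $2\pi j/p$. Continuity along the family, or the fact that the angle is determined by $r/R$, keeps $j$ fixed.

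One side issue also needs attention: $T$ distorts lengths, which is why the metric quantity must be written as a quadratic form $u_k^{\top}Qu_k$ rather than computed in the circular picture. The averaging argument above already absorbs this, since $L$ enters only through the fixed matrix $Q$.
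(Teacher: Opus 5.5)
Your argument is correct and is essentially the paper's. You reduce each member of the family affinely to a rotated regular polygon with fixed circumradius and density, write $m_k^2$ as a fixed quadratic form evaluated along a rotation orbit, and average; your vanishing sum $\sum_k e^{4\pi i jk/p}=0$ (using that $p$ is odd) plays exactly the role of the paper's lemma that $\sum_k \mathbf v_k\mathbf v_k^{T}$ commutes with $R_\theta$, $\theta\neq0,\pi$, and is therefore scalar.

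You go slightly further by justifying the affine reduction via the fixed chord angle $2\arccos(r/R)$, which the paper only asserts. As in the paper, though, concentricity of the two ellipses is really part of the meaning of ``homothetic'' here rather than a consequence of it.
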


Special cases of this result were first observed by Dan Reznik in a series of computer-assisted experiments \cite{Reznik2021,Reznik2024}. Theorem~\ref{thm:homellipse} extends these observations from triangles to arbitrary odd polygons and applies equally to simple and crossed Poncelet polygons.

The key ingredient is an affine averaging principle for rotationally symmetric vectors. This \,principle not only provides a concise proof of Theorem~\ref{thm:homellipse}, but also yields explicit formulas for several other quadratic invariants of odd Poncelet polygons, including the sums of the squared \,distances from the vertices and the side midpoints to the center, and the sum of the squared side lengths. These invariants arise naturally from the same affine-geometric argument, placing the power-circle invariant within a unified framework.

As an application, we prove that a convex Poncelet pentagon and the corresponding Poncelet star pentagon circumscribed about the same inellipse have equal total power-circle area.

Poncelet polygons and the geometric invariants associated with them have attracted considerable attention in recent years; see, for example,
\cite{Dragovic-Murad2026,Garciaetal.2026}
and the references therein.

\section{Affine transformations and homothetic ellipses}\label{sec:4}
\noindent
For a family $\mathcal P$ of odd $p$-gons, the problem of determining whether the total area of the power circles remains invariant may be reduced to a purely metric question. Indeed, if $m_k$ denotes the length of the median corresponding to the vertex $A_k$, then the radius of the $k$th power circle is $m_k$. Consequently,
\[
\sum_{k=0}^{p-1}\operatorname{Area}(\mathcal C_k)
=
\pi\sum_{k=0}^{p-1}m_k^2,
\]
where $\mathcal C_k$ is the power circle corresponding to the vertex $A_k$. Thus, it suffices to prove that
\[
\sum_{k=0}^{p-1}m_k^2
\]
is invariant throughout the family $\mathcal P$.

The key observation is that every odd $p$-gon inscribed in and circumscribed about a pair of \,homothetic ellipses is the image of a regular polygon under an affine transformation. This allows the problem to be reduced to the corresponding regular polygon, where the rotational symmetry implies that the horizontal and vertical contributions to certain quadratic sums are equal. An \,\,elementary affine averaging argument then transfers these identities to the original Poncelet family, yielding not only the desired power-circle invariant but also several related metric invariants.

We recall some basic facts concerning regular polygons. Throughout, we use the \emph{Schläfli symbol} to denote both convex and star regular polygons.

\subsection{Regular polygons}\label{sec.7.2}
\begin{definition}\label{def:regpol}
A \textit{regular polygon} is denoted by the Schläfli symbol $\{p/q\}$, where $p$ and $q$ are coprime positive integers. Its vertices are obtained by successive rotations of a vertex about the center $O$ through the angle
\[
\theta=\frac{2q\pi}{p}.
\]
The integer $q$ is called the \textit{density} of the polygon.

If $q=1$, the polygon is a regular convex $p$-gon. If
\[
1<q\le \left\lfloor \frac{p}{2}\right\rfloor,
\]
the polygon is a non-convex regular polygon, called a \textit{regular star polygon}. See Coxeter \cite{Coxeter1991}.
\end{definition}

\begin{theorem}\label{thm:reqpolpq}
The circumradius $R$ and the inradius $r$ of a regular polygon $\{p/q\}$ satisfy 
\begin{equation}\label{eq:reqpolpq}
  r=R\cos \left(\frac{q \pi}{p}\right).
\end{equation}
\end{theorem}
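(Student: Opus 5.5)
Place the center $O$ of the regular polygon $\{p/q\}$ at the origin and its vertices on the circle of radius $R$. By Definition~\ref{def:regpol}, the vertices are $V_k = R(\cos(\varphi_0 + k\theta), \sin(\varphi_0+k\theta))$ with $\theta = 2q\pi/p$. Each side $\overline{V_kV_{k+1}}$ is then a chord of the circumcircle subtending the central angle $\theta$. Because all sides are images of one another under rotation about $O$, they are all at the same distance from $O$. That common distance is the inradius $r$: the radius of the circle centered at $O$ tangent to every side line, which for a star polygon means tangent to the extended sides.

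To compute this distance, consider the isosceles triangle $OV_kV_{k+1}$, with $|OV_k| = |OV_{k+1}| = R$ and apex angle equal to the central angle between the two radii. The foot of the perpendicular from $O$ to the line $V_kV_{k+1}$ is the midpoint $N_k$ of the chord, and $ON_k$ bisects the apex angle. The right triangle $ON_kV_k$ then gives $r = |ON_k| = R\cos(\alpha/2)$, where $\alpha$ is the (unsigned, non-reflex) angle $\angle V_kOV_{k+1}$. Equivalently, you can compute directly: the midpoint is $N_k = \tfrac12(V_k+V_{k+1})$, so
\[
|N_k|^2 = \tfrac{R^2}{4}\,|e^{i\varphi}+e^{i(\varphi+\theta)}|^2 = \tfrac{R^2}{2}(1+\cos\theta) = R^2\cos^2(\theta/2),
\]
which gives $r = R\,|\cos(q\pi/p)|$.

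The remaining step, and the only one needing care, is removing the absolute value and confirming that the formula is stated with the right sign. Here you use the range of the density. For $1\le q\le \lfloor p/2\rfloor$ we have $0 < q\pi/p < \pi/2$; equality at $\pi/2$ would require $p = 2q$, which is impossible since $\gcd(p,q)=1$ and $p\ge3$. Hence $\cos(q\pi/p) > 0$, and $r = R\cos(q\pi/p)$ exactly. Geometrically, the central angle $\theta = 2q\pi/p$ lies in $(0,\pi)$, so it coincides with the non-reflex angle $\alpha$, consistent with the right-triangle argument.

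Finally, note that when $q > p/2$ the polygon $\{p/q\}$ is the same figure as $\{p/(p-q)\}$, traversed in the reverse direction. So the restriction $q \le \lfloor p/2\rfloor$ loses no generality, and identity~\eqref{eq:reqpolpq} holds for all regular polygons under the convention of Definition~\ref{def:regpol}.
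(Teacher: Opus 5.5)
Your argument is correct. The paper gives no proof of this statement; it is stated as a standard fact, followed only by a reference to Figure~\ref{fig:regpolypq}, and then used in the proof of Proposition~\ref{prop:regularmetric} to get $R=\sec\varphi$ and $|OM_k|=1$. The closest the paper comes to a proof is the midpoint identity in that same proof, $\tfrac12(z_n+z_{n+1})=R\cos\varphi\,e^{i(2n+1)\varphi}$. This is exactly your computation $|N_k|=R|\cos(\theta/2)|$: the side midpoints, which are the feet of the perpendiculars from $O$, lie at distance $R\cos(q\pi/p)$ from the center.

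What you add is the sign analysis, which the paper leaves implicit. For $q\le\lfloor p/2\rfloor$ and $p\ne 2q$ the cosine is positive, so the absolute value can be dropped. Be careful, though, with the phrase ``loses no generality'' in your last paragraph. The right-hand side $R\cos(q\pi/p)$ changes sign under $q\mapsto p-q$, so for a literal symbol $\{p/q\}$ with $p/2<q<p$ the formula is false as written; there $r=-R\cos(q\pi/p)>0$. The normalization $q<p/2$ is therefore part of the statement, not merely a convenient reduction.

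Also, the foot of the perpendicular is the midpoint $N_k$, which lies on the chord itself. So the incircle of a star polygon touches its actual sides, not just their extensions.
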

See Figure \ref{fig:regpolypq}.
\begin{figure}[htbp]
  \centering
\begin{subfigure}[b]{0.45\textwidth}
    \centering
\begin{tikzpicture}[scale=1]
\clip(-3.6,-3.6) rectangle (3.6,3.6);
\draw [line width=1.pt,gray] (0.,0.) circle (1.cm);
\draw [line width=1.pt,gray] (0.,0.) circle (3.23606797749979cm);
\draw [line width=1.pt,gray] (0.,0.) circle (1.2360679774997898cm);
\draw [line width=1.pt] (0.,3.23606797749979)-- (1.902113032590307,-2.6180339887498953);
\draw [line width=1.pt] (1.902113032590307,-2.6180339887498953)-- (-3.0776835371752527,1.);
\draw [line width=1.pt] (-3.0776835371752527,1.)-- (3.0776835371752536,1.);
\draw [line width=1.pt] (3.0776835371752536,1.)-- (-1.9021130325903075,-2.618033988749895);
\draw [line width=1.pt] (-1.9021130325903075,-2.618033988749895)-- (0.,3.2360679774997894);
\begin{scriptsize}
\draw [fill=blue] (0.726542528005361,1.) circle (1.5pt);
\draw[color=blue] (0.86,1.22) node {$A_3$};
\draw [fill=blue] (0.,-1.2360679774997898) circle (1.5pt);
\draw[color=blue] (0,-1.5) node {$A_0$};
\draw [fill=blue] (1.1755705045849463,-0.38196601125010543) circle (1.5pt);
\draw[color=blue] (1.4,-0.45) node {$A_4$};
\draw [fill=blue] (-0.726542528005361,1.) circle (1.5pt);
\draw[color=blue] (-0.9,1.22) node {$A_2$};
\draw [fill=blue] (-1.1755705045849465,-0.38196601125010476) circle (1.5pt);
\draw[color=blue] (-1.42,-0.41) node {$A_1$};
\draw [fill=red] (0.,3.23606797749979) circle (1.5pt);
\draw[color=red] (0,3.46) node {$B_0$};
\draw [fill=red] (-1.9021130325903075,-2.618033988749895) circle (1.5pt);
\draw[color=red] (-2.05,-2.9) node {$B_1$};
\draw [fill=red] (3.0776835371752536,1.) circle (1.5pt);
\draw[color=red] (3.3,1.05) node {$B_2$};
\draw [fill=red] (-3.0776835371752527,1.) circle (1.5pt);
\draw[color=red] (-3.35,1.05) node {$B_3$};
\draw [fill=red] (1.902113032590307,-2.6180339887498953) circle (1.5pt);
\draw[color=red] (2.05,-2.8) node {$B_4$};
\end{scriptsize}
\end{tikzpicture}
    \caption{$\{5/1\}$, $\{5/2\}$- a pentagram.}
    \label{fig:regpolypq(A)}
  \end{subfigure}
\begin{subfigure}[b]{0.45\textwidth}
\begin{tikzpicture}[scale=0.7]
\clip(-5.5,-5.5) rectangle (5.5,5.5);
\draw [line width=1.pt,gray] (0.,0.) circle (1.1099162641747424cm);
\draw [line width=1.pt,gray] (0.,0.) circle (1.cm);
\draw [line width=1.pt,gray] (0.,0.) circle (1.1099162641747424cm);
\draw [line width=1.pt,gray] (0.,0.) circle (1.6038754716096764cm);
\draw [line width=1.pt,gray] (0.,0.) circle (4.4939592074349335cm);
\draw [line width=1.pt] (0.01127128059024319,4.493945072686477)-- (-1.9600047644393377,-4.044014179186853);
\draw [line width=1.pt] (-1.9600047644393377,-4.044014179186853)-- (3.520535266810272,2.7931166809201406);
\draw [line width=1.pt] (3.520535266810272,2.7931166809201406)-- (-4.383780583057846,-0.9890081686691106);
\draw [line width=1.pt] (-4.383780583057846,-0.9890081686691106)-- (4.378764391290196,-1.0109855407760346);
\draw [line width=1.pt] (4.378764391290196,-1.0109855407760346)-- (-3.5064802098064614,2.810741164946508);
\draw [line width=1.pt] (-3.5064802098064614,2.810741164946508)-- (1.939694618612934,-4.053795029921128);
\draw [line width=1.pt] (1.939694618612934,-4.053795029921128)-- (0.01127128059024319,4.493945072686477);
\begin{scriptsize}
\draw [fill=brown] (0.0010197255316076338,1.1099995316034328) circle (2.0pt);
\draw[color=brown] (0.05,0.7) node {$A_0$};
\draw [fill=brown] (-0.8677543147901199,0.692172268415262) circle (2.0pt);
\draw[color=brown] (-0.55,0.55) node {$A_1$};
\draw [fill=brown] (-1.0821206509908883,-0.24721427284656555) circle (2.0pt);
\draw[color=brown] (-0.7,-0.1) node {$A_2$};
\draw [fill=brown] (-0.482931833005516,-0.9994382645616148) circle (2.0pt);
\draw[color=brown] (-0.3,-0.65) node {$A_3$};
\draw [fill=brown] (0.48109470743306904,-1.0003238887879713) circle (2.0pt);
\draw[color=brown] (0.35,-0.6) node {$A_4$};
\draw [fill=brown] (1.0821198455546628,-0.247217798422267) circle (2.0pt);
\draw[color=brown] (0.7,-0.1) node {$A_5$};
\draw [fill=brown] (0.869024607637719,0.6905767381834611) circle (2.0pt);
\draw[color=brown] (0.6,0.5) node {$A_6$};
\draw [fill=blue] (0.0025177425392164388,-1.5999980190526817) circle (2.0pt);
\draw[color=blue] (-0.05,-2.1) node {$B_0$};
\draw [fill=blue] (1.5606968104485155,0.3524563318424436) circle (2.0pt);
\draw[color=blue] (2,0.45) node {$B_6$};
\draw [fill=blue] (-0.6970613037207168,1.4401755236273022) circle (2.0pt);
\draw[color=blue] (-0.9,1.9) node {$B_5$};
\draw [fill=blue] (-1.2528136620602912,-0.995217528058606) circle (2.0pt);
\draw[color=blue] (-1.6,-1.25) node {$B_4$};
\draw [fill=blue] (1.2532997367092586,-0.994605333770387) circle (2.0pt);
\draw[color=blue] (1.6,-1.3) node {$B_3$};
\draw [fill=blue] (0.6925253682554742,1.442362164757042) circle (2.0pt);
\draw[color=blue] (0.9,1.9) node {$B_2$};
\draw [fill=blue] (-1.558919429286089,0.360236329364952) circle (2.0pt);
\draw[color=blue] (-1.95,0.5) node {$B_1$};
\draw [fill=red] (0.01127128059024319,4.493945072686477) circle (2.0pt);
\draw[color=red] (0.03,4.8) node {$C_0$};
\draw [fill=red] (-1.9600047644393377,-4.044014179186853) circle (2.0pt);
\draw[color=red] (-2.11,-4.3) node {$C_1$};
\draw [fill=red] (3.520535266810272,2.7931166809201406) circle (2.0pt);
\draw[color=red] (3.8,3.05) node {$C_2$};
\draw [fill=red] (-4.383780583057846,-0.9890081686691106) circle (2.0pt);
\draw[color=red] (-4.74,-1.1) node {$C_3$};
\draw [fill=red] (4.378764391290196,-1.0109855407760346) circle (2.0pt);
\draw[color=red] (4.7,-1.1) node {$C_4$};
\draw [fill=red] (-3.5064802098064614,2.810741164946508) circle (2.0pt);
\draw[color=red] (-3.8,3.1) node {$C_5$};
\draw [fill=red] (1.939694618612934,-4.053795029921128) circle (2.0pt);
\draw[color=red] (2.15,-4.3) node {$C_6$};
\end{scriptsize}
\end{tikzpicture}
    \caption{$\{7/1\}$, $\{7/2\}$, $\{7/3\}$.}
    \label{fig:regpolypq(B)}
\end{subfigure}
  \caption{Some regular polygons.}
  \label{fig:regpolypq}
\end{figure}

For the purposes of this paper, we extend the notion of density from regular polygons. We say that a polygon has \emph{density} $q$ if its vertices
are connected according to the same cyclic ordering as the regular polygon $\{p/q\}$.

Let $p\geq 3$ be an odd integer. For a $p$-gon of density $q$,  we define
\[
K_{p,q}:=\sum_{k=0}^{p-1}m_k^2
\]
where $m_k$ was defined in Definition \ref{def:powcirc}. 

Throughout this section, $\{p/q\}$ denotes the regular polygon of density $q$ and unit inradius.

\begin{proposition}\label{prop:regularmetric}
Let $P=\{p/q\}$ be a regular odd $p$-gon of unit inradius. Then
\begin{align}
|OA_k|&=\sec\!\left(\frac{q\pi}{p}\right),\label{eq:OA}\\
|OM_k|&=1,\label{eq:OM}\\
|A_kA_{k+1}|&=
2\tan\left(\frac{q\pi}{p}\right),\label{eq:side}\\
|A_kM_k|
&=
\sec\left(\frac{q\pi}{p}\right)-(-1)^q.\label{eq:median}
\end{align}
\end{proposition}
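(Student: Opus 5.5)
Set the regular polygon $\{p/q\}$ of unit inradius in coordinates centered at $O$ and place the vertices explicitly: $A_k = R\,(\cos(\phi_0 + k\theta), \sin(\phi_0+k\theta))$ with $\theta = 2q\pi/p$. The side $\overline{A_kA_{k+1}}$ subtends the central angle $\theta$. Everything then reduces to chord computations in the circumcircle, with Theorem~\ref{thm:reqpolpq} relating $R$ and $r=1$.

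\textbf{Formulas \eqref{eq:OA}--\eqref{eq:side}.} By Theorem~\ref{thm:reqpolpq} with $r=1$, we get $R=\sec(q\pi/p)$, which is \eqref{eq:OA}. Provided $2q<p$, which holds since $q\le\lfloor p/2\rfloor$ and $p$ is odd, the angle $\theta/2=q\pi/p$ lies in $(0,\pi/2)$. In the isosceles triangle $OA_kA_{k+1}$, the distance from $O$ to the midpoint of a side is therefore $R\cos(q\pi/p)=1$. Every $M_k$ is the midpoint of a side, so \eqref{eq:OM} follows. The half-side is $R\sin(q\pi/p)=\tan(q\pi/p)$, which gives \eqref{eq:side}.

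\textbf{Formula \eqref{eq:median}: position of $M_k$.} The key observation is that $M_k$ lies on the line $OA_k$. Indeed, $A_{n+k}$ and $A_{n+k+1}$ sit at angles $\phi_0+(n+k)\theta$ and $\phi_0+(n+k+1)\theta$, so their midpoint has polar angle $\phi_0+k\theta+(2n+1)\theta/2$. Since $p=2n+1$, we have $(2n+1)\theta/2 = q\pi$. Hence $M_k$ is at distance $1$ from $O$ in the direction of $A_k$ rotated by $q\pi$. That direction equals the direction of $A_k$ if $q$ is even, and the opposite direction if $q$ is odd. Because $M_k$ is on the ray $OA_k$ or its opposite, the distances combine along a line: $|A_kM_k| = |R-1|$ when $q$ is even, and $|A_kM_k| = R+1$ when $q$ is odd. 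As $R=\sec(q\pi/p)\ge 1$, this is $\sec(q\pi/p)-(-1)^q$ in both cases.

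\textbf{Main obstacle.} The step needing care is the angle bookkeeping for $M_k$, namely confirming that the midpoint's polar angle is the average of the two vertex angles. This holds because the arc between consecutive vertices is taken as the minor arc of size $\theta$ modulo $2\pi$, equivalently because $2q<p$ makes $\cos(q\pi/p)>0$. A second point to check is that the labeling of the opposite side matches Definition~\ref{def:powcirc}, with vertices indexed in the order of the star polygon rather than angularly.
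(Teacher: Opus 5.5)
Your proposal is correct and is essentially the paper's proof. The paper likewise takes $R=\sec(q\pi/p)$ from the unit inradius, gets $|OM_k|=1$ and the side length from the isosceles triangle, and for the median computes in complex form, with $z_k=Re^{2ik\varphi}$, that $M_0=\tfrac12(z_n+z_{n+1})=R\cos\varphi\,e^{i(2n+1)\varphi}=(-1)^q$. This is exactly your polar-angle observation that $M_k$ is the unit vector in the direction of $A_k$ rotated by $q\pi$, and your remark that $\cos(q\pi/p)>0$ because $2q<p$ is a slightly more explicit justification of the same step.
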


\begin{proof}
Let
\[
\varphi=\frac{q\pi}{p}.
\]
Since the inradius equals one, the circumradius of $P$ is
\[
R=\sec\varphi.
\]
Hence
\[
|OA_k|=R=\sec\varphi,
\]
while every midpoint $M_k$ lies on the incircle, giving
\[
|OM_k|=1.
\]
Furthermore,
\[
|A_kA_{k+1}|
=
2R\sin\varphi
=
2\tan\varphi.
\]

It remains to compute the median length. Identifying the plane with $\mathbb C$, the complex coordinate of the vertex $A_k$ can be presented by $z_k$ where
\[
z_k=R e^{2ik\varphi},
\qquad
k=0,1,\dots,2n.
\]

Since $M_0$ is the midpoint of $\overline{A_nA_{n+1}}$, its complex coordinate is
\[
w_0
=
\frac{1}{2}(z_n+z_{n+1})
=
\frac{R}{2}
\left(
e^{2in\varphi}
+
e^{2i(n+1)\varphi}
\right).
\]
Applying the elementary identity
\[
\frac{e^{i\theta_1}+e^{i\theta_2}}{2}
=
\cos\!\left(\frac{\theta_1-\theta_2}{2}\right)
e^{i(\theta_1+\theta_2)/2},
\]
we obtain
\[
w_0=R\cos\varphi\,e^{i(2n+1)\varphi}.
\]
Since $(2n+1)\varphi=q\pi$ and $R=\sec \varphi$, it follows that
\[
w_0
=
(-1)^q.
\]
Therefore
\[
|A_0M_0|
=
|R-(-1)^q|
=
\sec\varphi-(-1)^q,
\]
completing the proof.
\end{proof}

\subsection{An Affine Averaging Principle}\label{sec:affprin}

\begin{lemma}[Rotational symmetry]\label{lemm:rotsym}
Let
\[
\mathbf v_0,\mathbf v_1,\ldots,\mathbf v_{p-1}\in\mathbb R^2
\]
satisfy
\[
\mathbf v_{k+1}=R_\theta\mathbf v_k,
\qquad
\theta=\frac{2q\pi}{p},
\]
where $R_\theta$ denotes the matrix of rotation through the angle $\theta$,
$p$ is odd, and $\gcd(p,q)=1$. Writing
\[
\mathbf v_k=
\begin{pmatrix}
x_k\\
y_k
\end{pmatrix},
\]
one has
\[
\sum_{k=0}^{p-1}x_k^2
=
\sum_{k=0}^{p-1}y_k^2
=
\frac12
\sum_{k=0}^{p-1}\|\mathbf v_k\|^2.
\]
Moreover,
\[
\sum_{k=0}^{p-1}x_ky_k=0.
\]
\end{lemma}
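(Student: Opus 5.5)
Identify $\mathbb R^2$ with $\mathbb C$ and write $z_k=x_k+iy_k$ for the complex coordinate of $\mathbf v_k$. The hypothesis $\mathbf v_{k+1}=R_\theta\mathbf v_k$ then says $z_{k+1}=e^{i\theta}z_k$, so $z_k=e^{ik\theta}z_0$ and $|z_k|=|z_0|=:\rho$ for all $k$. The plan is to express the three sums through the single complex quantity $\sum_k z_k^2$ and show that it vanishes.

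Since $z_k^2=(x_k^2-y_k^2)+2i\,x_ky_k$, we have
\[
\sum_{k=0}^{p-1}z_k^2=\sum_{k=0}^{p-1}(x_k^2-y_k^2)+2i\sum_{k=0}^{p-1}x_ky_k .
\]
If this sum is zero, its real part gives $\sum x_k^2=\sum y_k^2$, and its imaginary part gives $\sum x_ky_k=0$. Combining the first equality with $\sum (x_k^2+y_k^2)=\sum\|\mathbf v_k\|^2$ gives $\sum x_k^2=\sum y_k^2=\tfrac12\sum\|\mathbf v_k\|^2$. So all three claims follow from $\sum z_k^2=0$.

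To prove that, write
\[
\sum_{k=0}^{p-1}z_k^2=z_0^2\sum_{k=0}^{p-1}\omega^k,\qquad \omega=e^{2i\theta}=e^{4q\pi i/p}.
\]
Here $\omega^p=e^{4q\pi i}=1$, so $\omega$ is a $p$th root of unity. The key step, and the only place the hypotheses enter, is to check that $\omega\ne1$. Now $\omega=1$ exactly when $p\mid 2q$. Since $p$ is odd, $\gcd(p,2)=1$, so this would force $p\mid q$. That contradicts $\gcd(p,q)=1$ together with $p\ge3$.

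Hence $\omega\neq1$, and the geometric sum gives $\sum_{k=0}^{p-1}\omega^k=(\omega^p-1)/(\omega-1)=0$, which completes the argument. The main point to watch is exactly this non-degeneracy check: the oddness of $p$ is what rules out $\omega=1$. For even $p$ with $q=p/2$ the conclusion fails, which is consistent with the statement being restricted to odd $p$.
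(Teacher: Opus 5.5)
Your proof is correct, but it takes a different route from the paper. The paper forms the second-moment matrix $S=\sum_k \mathbf v_k\mathbf v_k^T$ and observes that $R_\theta S R_\theta^T=S$; this reindexing silently uses $\mathbf v_p=\mathbf v_0$. It then invokes the fact that a symmetric matrix commuting with a rotation through an angle other than $0,\pi$ must be scalar, and reading off the entries of $S=\lambda I$ gives all three identities at once. You instead encode the traceless part of $S$ as the single complex number $\sum_k z_k^2=\sum_k(x_k^2-y_k^2)+2i\sum_k x_ky_k$, and you kill it with an explicit geometric sum of $p$th roots of unity. The two arguments are the same phenomenon in two languages: rotating by $\theta$ multiplies that complex number by $e^{2i\theta}$, so invariance forces it to vanish exactly when $e^{2i\theta}\neq 1$, which is the paper's condition $\theta\neq 0,\pi$. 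The paper's version is coordinate-free and shows conceptually that any rotation-invariant quadratic form is isotropic. Yours is fully computational and has the advantage of deriving the nondegeneracy condition from the arithmetic hypotheses, which the paper merely asserts. One correction to your closing remark: oddness of $p$ is not really what is needed. If $\gcd(p,q)=1$ and $p\mid 2q$, then $p\mid 2$, so $\omega\neq 1$ for every $p\ge 3$ regardless of parity. Your example $q=p/2$ violates $\gcd(p,q)=1$ unless $p=2$. The restriction to odd $p$ comes from the Poncelet setting of the paper, not from this lemma.
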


\begin{proof}
Consider the symmetric matrix
\[
S=
\sum_{k=0}^{p-1}
\mathbf v_k\mathbf v_k^{T}.
\]
Since
\[
\mathbf v_{k+1}=R_\theta\mathbf v_k,
\]
we have
\[
R_\theta SR_\theta^T
=
\sum_{k=0}^{p-1}
R_\theta\mathbf v_k(R_\theta\mathbf v_k)^T
=
\sum_{k=0}^{p-1}
\mathbf v_{k+1}\mathbf v_{k+1}^T
=
S.
\]
Hence $S$ commutes with the nontrivial rotation $R_\theta$. Since
$\theta\neq0,\pi$, the only symmetric matrices commuting with
$R_\theta$ are scalar multiples of the identity. Thus
\[
S=\lambda I
\]
for some $\lambda\in\mathbb R$.

On the other hand,
\[
S
=
\begin{pmatrix}
\displaystyle\sum x_k^2 &
\displaystyle\sum x_ky_k\\[2ex]
\displaystyle\sum x_ky_k &
\displaystyle\sum y_k^2
\end{pmatrix}.
\]
Comparing entries gives
\[
\sum x_k^2=\sum y_k^2=\lambda,
\qquad
\sum x_ky_k=0.
\]
Since
\[
\sum\|\mathbf v_k\|^2
=
\sum x_k^2+\sum y_k^2,
\]
the first identity follows immediately.
\end{proof}

The following simple lemma is the key ingredient behind all of the metric invariants established in this paper.

\begin{lemma}[Affine averaging principle]\label{lemm:affineavgprin}
Let
\[
Q=
\begin{pmatrix}
\alpha^{-1} & 0\\
0 & \beta^{-1}
\end{pmatrix},
\]
and let $\mathbf v_k=
\begin{pmatrix}
x_k\\
y_k
\end{pmatrix}\in\mathbb R^2$,
$k=0,1,\dots,p-1$, be a family of rotationally symmetric vectors. Then
\begin{equation}\label{eq:affinesum}
\sum_{k=0}^{p-1}\|Q^{-1}\mathbf v_k\|^2
=
\frac{\alpha^2+\beta^2}{2}
\sum_{k=0}^{p-1}\|\mathbf v_k\|^2.
\end{equation}
\end{lemma}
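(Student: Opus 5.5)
The plan is to compute $Q^{-1}\mathbf v_k$ explicitly and then apply Lemma~\ref{lemm:rotsym} coordinatewise. Here ``rotationally symmetric'' means exactly the hypothesis of Lemma~\ref{lemm:rotsym}: $\mathbf v_{k+1}=R_\theta\mathbf v_k$ with $\theta=2q\pi/p$, $p$ odd and $\gcd(p,q)=1$.

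Since $Q$ is diagonal,
\[
Q^{-1}=\begin{pmatrix}\alpha & 0\\ 0 & \beta\end{pmatrix},
\qquad
Q^{-1}\mathbf v_k=\begin{pmatrix}\alpha x_k\\ \beta y_k\end{pmatrix},
\]
and therefore
\[
\sum_{k=0}^{p-1}\|Q^{-1}\mathbf v_k\|^2=\alpha^2\sum_{k=0}^{p-1}x_k^2+\beta^2\sum_{k=0}^{p-1}y_k^2 .
\]
Next I invoke Lemma~\ref{lemm:rotsym}, which gives $\sum x_k^2=\sum y_k^2=\tfrac12\sum\|\mathbf v_k\|^2$. Substituting, the right-hand side becomes $\tfrac{\alpha^2+\beta^2}{2}\sum\|\mathbf v_k\|^2$, which is \eqref{eq:affinesum}.

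There is no real obstacle. The only point to check is that the vectors satisfy the hypotheses of Lemma~\ref{lemm:rotsym}, in particular that $\theta\neq 0,\pi$. This holds because $p$ is odd and $\gcd(p,q)=1$, so $2q/p$ is not an integer. The vanishing cross term $\sum x_ky_k=0$ is not needed here, since $Q^{-1}$ is diagonal. It would become relevant only if one replaced $Q$ by a general linear map, for instance after composing with a rotation to diagonalize it.
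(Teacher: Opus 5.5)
Your proof is correct and is essentially the paper's own argument. You write $Q^{-1}\mathbf v_k=(\alpha x_k,\beta y_k)^T$, expand to $\alpha^2\sum x_k^2+\beta^2\sum y_k^2$, and substitute $\sum x_k^2=\sum y_k^2=\tfrac12\sum\|\mathbf v_k\|^2$ from Lemma~\ref{lemm:rotsym}, exactly as the paper does; your explicit check that $\theta\not\equiv 0,\pi \pmod{2\pi}$ and your remark that the cross term $\sum x_ky_k$ is not needed are both accurate.
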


\begin{proof}
By Lemma~\ref{lemm:rotsym},
\[
\sum_{k=0}^{p-1}x_k^2
=
\sum_{k=0}^{p-1}y_k^2
=
\frac12
\sum_{k=0}^{p-1}\|\mathbf v_k\|^2.
\]
Moreover,
\[
Q^{-1}\mathbf v_k=\begin{pmatrix}
    \alpha x_k\\
    \beta y_k
\end{pmatrix}
\]
gives
\[
\|Q^{-1}\mathbf v_k\|^2
=
\alpha^2x_k^2+\beta^2y_k^2.
\]
Summing over $k$ gives
\[
\sum_{k=0}^{p-1}\|Q^{-1}\mathbf v_k\|^2
=
\alpha^2\sum_{k=0}^{p-1}x_k^2
+
\beta^2\sum_{k=0}^{p-1}y_k^2
=
\frac{\alpha^2+\beta^2}{2}
\sum_{k=0}^{p-1}\|\mathbf v_k\|^2,
\]
which proves \eqref{eq:affinesum}.
\end{proof}

The hypothesis of Lemma~\ref{lemm:affineavgprin} is satisfied by every rotationally symmetric family of vectors associated with a regular polygon, such as the position vectors of the vertices, the side midpoints, the side vectors, and the median vectors. Therefore, all of the quadratic invariants established below follow from the same affine averaging principle.

\subsection{Homothetic Ellipses}

\begin{corollary}\label{cor:vertexsum}
Let $p \geq 3$ be an odd integer and let
\[
P=A_0A_1\cdots A_{p-1}
\]
be a $p$-gon of density $q$ inscribed in and circumscribed about a pair of homothetic ellipses whose inellipse has semi-axes lengths $\alpha$ and $\beta$. Let $O$ denote the common center of the ellipses. 
Then
\[
\sum_{k=0}^{p-1}|OA_k|^2
=
\frac{p}{2}\,(\alpha^2+\beta^2)
\sec^2\!\left(\frac{q\pi}{p}\right).
\]
\end{corollary}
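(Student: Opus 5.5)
The plan is to reduce to the regular polygon $\{p/q\}$ of unit inradius via an affine map, and then apply Lemma~\ref{lemm:affineavgprin} to the vertex position vectors.

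\textbf{Step 1 (normalization).} Choose coordinates with origin at $O$ and axes along the common principal axes of the two homothetic ellipses. The inellipse is then $x^2/\alpha^2+y^2/\beta^2=1$, and the outer ellipse is a scaled copy of it.

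\textbf{Step 2 (reduction to a regular polygon).} The linear map $Q=\operatorname{diag}(\alpha^{-1},\beta^{-1})$ sends the inellipse to the unit circle and the outer ellipse to a concentric circle. Hence $Q(P)$ is a $p$-gon of density $q$ inscribed in and circumscribed about two concentric circles.

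\textbf{Step 3 (identify the image polygon).} I claim $Q(P)$ is the regular polygon $\{p/q\}$ of unit inradius. Every side is tangent to the unit circle, and every vertex lies on a circle of some radius $R$. So each vertex sends two tangents to the unit circle, and consecutive vertices are related by a rotation about $O$ through a fixed angle determined by $R$. The density-$q$ closure condition for $p$ steps then forces this angle to be $\theta=2q\pi/p$. By Theorem~\ref{thm:reqpolpq}, $R=\sec(q\pi/p)$; this also agrees with \eqref{eq:OA} of Proposition~\ref{prop:regularmetric}. I expect this identification to be the main obstacle. It uses the standard fact that the Poncelet map for concentric circles is a rotation, together with the fact that the closing rotation number equals the density $q$. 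I would argue it by tracking the cumulative angle swept by the vertices: they wind $q$ times around $O$ in $p$ steps.

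\textbf{Step 4 (averaging).} Let $\mathbf v_k=Q\,\overrightarrow{OA_k}$. These are the position vectors of the vertices of $\{p/q\}$, so $\mathbf v_{k+1}=R_\theta\mathbf v_k$ and each has norm $\sec(q\pi/p)$. Since $\overrightarrow{OA_k}=Q^{-1}\mathbf v_k$, Lemma~\ref{lemm:affineavgprin} gives
\[
\sum_{k=0}^{p-1}|OA_k|^2=\frac{\alpha^2+\beta^2}{2}\sum_{k=0}^{p-1}\|\mathbf v_k\|^2=\frac{\alpha^2+\beta^2}{2}\,p\sec^2\!\left(\frac{q\pi}{p}\right),
\]
which is the claimed formula.
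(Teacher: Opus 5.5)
Your proposal is correct and follows essentially the same route as the paper: map the inellipse to the unit circle by $Q=\operatorname{diag}(\alpha^{-1},\beta^{-1})$, identify the image as the regular $\{p/q\}$ of unit inradius, and apply the affine averaging principle (Lemma~\ref{lemm:affineavgprin}) to the vertex vectors with $|OA_k|=\sec(q\pi/p)$ from Proposition~\ref{prop:regularmetric}. Your Step 3, the concentric-circle Poncelet map as a fixed rotation with winding number $q$, gives more justification than the paper does, since the paper simply asserts as a ``key observation'' that such a polygon is an affine image of a regular polygon.
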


\begin{proof}
Apply Lemma~\ref{lemm:affineavgprin} to the position vectors
$\overrightarrow{OA_k}$ and use Proposition~\ref{prop:regularmetric}.
\end{proof}

\begin{corollary}\label{cor:midpointsum}
Let $P$ be as in Corollary~\ref{cor:vertexsum}. Then
\[
\sum_{k=0}^{p-1}|OM_k|^2
=
\frac{p}{2}(\alpha^2+\beta^2),
\]
where $M_k$ denotes the midpoint of the side
$\overline{A_{n+k}A_{n+k+1}}$.
\end{corollary}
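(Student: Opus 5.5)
The plan mirrors the proof of Corollary~\ref{cor:vertexsum}, applied to the side midpoints instead of the vertices.

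First, normalize coordinates. Place $O$ at the origin and align the axes with the common axes of the two homothetic ellipses. The inellipse is then $x^2/\alpha^2+y^2/\beta^2=1$, and the outer ellipse is its image under the homothety centered at $O$ with ratio $\sec(q\pi/p)$. The linear map $Q=\operatorname{diag}(\alpha^{-1},\beta^{-1})$ sends the inellipse to the unit circle and the outer ellipse to the circle of radius $\sec(q\pi/p)$. Thus $Q(P)$ is a $p$-gon of density $q$ inscribed in and circumscribed about these concentric circles.

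Next, I would identify $Q(P)$ as the regular polygon $\{p/q\}$ of unit inradius, up to rotation, exactly as in the setup used for Corollary~\ref{cor:vertexsum}. Each side is a chord of the outer circle tangent to the inner circle, so it subtends the fixed central angle $2q\pi/p$, in agreement with Theorem~\ref{thm:reqpolpq}. Consecutive vertices are therefore obtained by the rotation $R_\theta$ with $\theta=2q\pi/p$. This is the only point requiring care: one must check that the density $q$ fixes the rotation angle as $2q\pi/p$ rather than another admissible chord angle. Since this is already implicit in the paper's key observation, I would simply invoke it.

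Since $Q$ is linear, it preserves midpoints. Hence $Q(M_k)$ is the midpoint $M_k'$ of the corresponding side of the regular polygon. The vectors $\mathbf v_k=\overrightarrow{OM_k'}$ then satisfy $\mathbf v_{k+1}=R_\theta\mathbf v_k$, up to the index shift by $n$, which only relabels the family. So they form a rotationally symmetric family, and $\overrightarrow{OM_k}=Q^{-1}\mathbf v_k$.

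Lemma~\ref{lemm:affineavgprin} then gives
\[
\sum_{k=0}^{p-1}|OM_k|^2=\frac{\alpha^2+\beta^2}{2}\sum_{k=0}^{p-1}|OM_k'|^2 .
\]
By \eqref{eq:OM} in Proposition~\ref{prop:regularmetric}, each $|OM_k'|=1$. The right-hand side is therefore $\frac{p}{2}(\alpha^2+\beta^2)$, which is the claimed formula.
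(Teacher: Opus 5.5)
Your proposal is correct and follows the paper's proof: apply Lemma~\ref{lemm:affineavgprin} to the midpoint position vectors $\overrightarrow{OM_k}$, pulled back from the regular polygon $\{p/q\}$ via $Q$, and use $|OM_k|=1$ from \eqref{eq:OM}. Your version also spells out details the paper leaves implicit: $Q$ preserves midpoints, and the closing condition for density $q$ forces the radius ratio $\sec(q\pi/p)$ and the rotation angle $2q\pi/p$.
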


\begin{proof}
Apply Lemma~\ref{lemm:affineavgprin} to the position vectors $\overrightarrow{OM_k}$ and use Proposition~\ref{prop:regularmetric}.
\end{proof}

\begin{corollary}\label{cor:sidesum}
Let $P$ be as in Corollary~\ref{cor:vertexsum}. Then
\[
\sum_{k=0}^{p-1}|A_kA_{k+1}|^2
=
2p(\alpha^2+\beta^2)
\tan^2\!\left(\frac{q\pi}{p}\right).
\]
\end{corollary}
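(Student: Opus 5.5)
The argument follows the pattern of Corollaries~\ref{cor:vertexsum} and~\ref{cor:midpointsum}, now applied to the side vectors. First I reduce to the regular case. Choose coordinates centered at $O$ with axes along the principal axes of the inellipse. The homothetic pair of ellipses is then the image of a concentric pair of circles under the linear map $Q^{-1}=\operatorname{diag}(\alpha,\beta)$. The inner circle is the unit circle and the outer circle has radius equal to the common homothety ratio. Pulling the Poncelet $p$-gon $P$ back by $Q$ gives a $p$-gon inscribed in and circumscribed about two concentric circles. By Poncelet's porism for concentric circles, together with the density-$q$ hypothesis, this pulled-back polygon is the regular polygon $\{p/q\}$ of unit inradius, up to rotation about $O$. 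Rotation does not affect the hypothesis of Lemma~\ref{lemm:affineavgprin}, since rotating a rotationally symmetric family yields another such family.

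Next I record the rotational symmetry. Let $B_0,\dots,B_{p-1}$ be the vertices of this regular polygon, and set
\[
\mathbf v_k=\overrightarrow{B_kB_{k+1}}.
\]
Since $B_{k+1}=R_\theta B_k$ with $\theta=2q\pi/p$, linearity of $R_\theta$ gives
\[
\mathbf v_{k+1}=R_\theta B_{k+1}-R_\theta B_k=R_\theta\mathbf v_k,
\]
so $\{\mathbf v_k\}$ is a rotationally symmetric family. The map $Q^{-1}$ is linear and sends $B_k$ to $A_k$, so $Q^{-1}\mathbf v_k=\overrightarrow{A_kA_{k+1}}$.

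Now I apply Lemma~\ref{lemm:affineavgprin}:
\[
\sum_{k=0}^{p-1}|A_kA_{k+1}|^2=\frac{\alpha^2+\beta^2}{2}\sum_{k=0}^{p-1}\|\mathbf v_k\|^2 .
\]
By \eqref{eq:side} in Proposition~\ref{prop:regularmetric}, each $\|\mathbf v_k\|^2=4\tan^2(q\pi/p)$. Summing these $p$ terms and multiplying by $(\alpha^2+\beta^2)/2$ gives $2p(\alpha^2+\beta^2)\tan^2(q\pi/p)$, which is the claimed formula.

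The only step that is not routine is the first one: justifying that the affine preimage is exactly the regular polygon $\{p/q\}$ with unit inradius and the same density. I would argue as follows. The map $Q$ sends the inellipse to the unit circle and the outer ellipse to a concentric circle. Tangent lines to a circle whose chord endpoints lie on a concentric circle subtend a constant central angle. So successive vertices are related by a fixed rotation $R_{\pm\theta'}$, and closure after $p$ steps while winding with the density $q$ of the ordering forces $\theta'=2q\pi/p$. Reversing orientation if necessary, this identifies the preimage with $\{p/q\}$, and Theorem~\ref{thm:reqpolpq} fixes the radii.
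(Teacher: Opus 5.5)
Your proposal is correct and matches the paper's proof, which applies Lemma~\ref{lemm:affineavgprin} to the side vectors $\overrightarrow{A_kA_{k+1}}$ and takes $|A_kA_{k+1}|=2\tan(q\pi/p)$ from Proposition~\ref{prop:regularmetric}. Your additional argument that the pulled-back polygon is the regular $\{p/q\}$ of unit inradius fills in a step the paper only asserts as its ``key observation''; that argument uses the constant central angle subtended by chords of the outer circle tangent to the concentric inner circle, together with closure and the density hypothesis.
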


\begin{proof}
Apply Lemma~\ref{lemm:affineavgprin} to the side vectors
$\overrightarrow{A_kA_{k+1}}$ and use Proposition~\ref{prop:regularmetric}.
\end{proof}

\begin{corollary}\label{cor:mediansum}
Let $P$ be as in Corollary~\ref{cor:vertexsum}. Then
\[
\sum_{k=0}^{p-1}m_k^2
=
\frac{p}{2}(\alpha^2+\beta^2)
\left(
\sec\!\left(\frac{q\pi}{p}\right)-(-1)^q
\right)^2,
\]
where $m_k=|A_kM_k|$ denotes the median corresponding to the vertex $A_k$.
\end{corollary}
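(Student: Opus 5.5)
The plan is to follow the same pattern as Corollaries~\ref{cor:vertexsum}--\ref{cor:sidesum} and apply Lemma~\ref{lemm:affineavgprin} to the median vectors $\overrightarrow{A_kM_k}$.

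First I will set up the affine normalization. Choose coordinates with $O$ at the origin and the axes of the ellipses along the coordinate axes. Let $Q=\operatorname{diag}(\alpha^{-1},\beta^{-1})$. Then $Q$ sends the inellipse to the unit circle, and it sends the homothetic outer ellipse to a concentric circle. The image $QP$ is therefore a $p$-gon inscribed in and circumscribed about two concentric circles.

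By Poncelet's porism applied to concentric circles, the map taking a vertex to the next one is a fixed rotation about $O$. Hence $QP$ is the regular polygon $\{p/q\}$ of unit inradius, up to a rotation. Affine maps preserve incidence and cyclic order, so the density is still $q$. The assertion that the rotation angle is exactly $2q\pi/p$ is the one point I will state carefully: the rotation angle $\theta$ must satisfy $p\theta\equiv 0 \pmod{2\pi}$, and the connection pattern fixes $\theta=2q\pi/p$. Theorem~\ref{thm:reqpolpq} then fixes the outer radius as $\sec(q\pi/p)$.

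Next, affine maps preserve midpoints. So $QM_k$ is the midpoint of $QA_{n+k}QA_{n+k+1}$, which is exactly the point $M_k$ of the regular polygon. Put $\mathbf v_k=Q\,\overrightarrow{A_kM_k}$, the median vectors of the regular polygon. Since index shift corresponds to rotation by $R_\theta$, we have $\mathbf v_{k+1}=R_\theta\mathbf v_k$, so the family $\mathbf v_k$ is rotationally symmetric. Lemma~\ref{lemm:affineavgprin} then gives
\[
\sum_{k=0}^{p-1} m_k^2=\sum_{k=0}^{p-1}\|Q^{-1}\mathbf v_k\|^2=\frac{\alpha^2+\beta^2}{2}\sum_{k=0}^{p-1}\|\mathbf v_k\|^2 .
\]

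Finally, by \eqref{eq:median} of Proposition~\ref{prop:regularmetric}, each $\|\mathbf v_k\|$ equals $\sec(q\pi/p)-(-1)^q$. Summing $p$ copies of its square yields the stated formula.

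The only delicate step is the identification of $QP$ with $\{p/q\}$, including the correct rotation angle and the labeling of $M_k$ under the density-$q$ ordering. Once that is settled, the rest is a direct substitution. I also note the check $\sec(q\pi/p)-(-1)^q\ge 0$, which holds because $\sec(q\pi/p)\ge 1$; this confirms that the absolute value in Proposition~\ref{prop:regularmetric} was resolved correctly.
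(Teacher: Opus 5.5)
Your proposal is correct and takes the same route as the paper: normalize by $Q=\operatorname{diag}(\alpha^{-1},\beta^{-1})$ to the regular polygon $\{p/q\}$ of unit inradius, apply Lemma~\ref{lemm:affineavgprin} to the rotationally symmetric median vectors, and substitute \eqref{eq:median}. You also spell out the affine reduction and the preservation of midpoints, which the paper's one-line proof leaves implicit.
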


\begin{proof}
Apply Lemma~\ref{lemm:affineavgprin} to the median vectors
$\overrightarrow{A_kM_k}$ and use Proposition~\ref{prop:regularmetric}.
\end{proof}

\begin{proof}[Proof of Theorem~\ref{thm:homellipse}]
Since the radius of the power circle $\mathcal C_k$ equals $m_k$,
\[
\sum_{k=0}^{p-1}\operatorname{Area}(\mathcal C_k)
=
\pi\sum_{k=0}^{p-1}m_k^2.
\]
The result therefore follows directly from Corollary~\ref{cor:mediansum}.
\end{proof}

\medskip

\begin{example}\label{ex:equalms}
Let $m_{p,q}$ denote the length of the median of the regular $p$-gon of density $q$. Using the values
\[
\sec\!\left(\frac{\pi}{5}\right)
=\frac{4}{\sqrt5+1},
\qquad
\sec\!\left(\frac{2\pi}{5}\right)
=\sqrt5+1,
\]
Proposition~\ref{prop:regularmetric} yields
\[
m_{5,1}
=
\sec\!\left(\frac{\pi}{5}\right)+1
=
\sqrt5,
\]
and
\[
m_{5,2}
=
\sec\!\left(\frac{2\pi}{5}\right)-1
=
\sqrt5.
\]
Hence
\[
m_{5,1}=m_{5,2}=\sqrt5.
\]
Thus, the regular pentagon and the regular pentagram have the same median length.
\end{example}

This remarkable coincidence underlies the equality of the total power-circle areas for the convex and star Poncelet pentagons established in the following result.

\begin{figure}
    \centering
\begin{tikzpicture}[scale=0.78]
\clip(-6,-5) rectangle (6.,5);
\draw [rotate around={0.:(0.,0.)},line width=1.pt,gray] (0.,0.) ellipse (1.618033988749895cm and 1.2135254915624212cm);
\draw [rotate around={0.:(0.,0.)},line width=1.pt,gray] (0.,0.) ellipse (2.cm and 1.5cm);
\draw [rotate around={0.:(0.,0.)},line width=1.pt,gray] (0.,0.) ellipse (5.236067977499788cm and 3.9270509831248424cm);
\draw [line width=1.pt] (4.232075097435111,2.3123788361773547)-- (-5.236063578787058,-0.005090279250499996);
\draw [line width=1.pt] (-5.236063578787058,-0.005090279250499996)-- (4.240053740297761,-2.304142591337816);
\draw [line width=1.pt] (4.240053740297761,-2.304142591337816)-- (-1.6244874871408412,3.7332713069613477);
\draw [line width=1.pt] (-1.6244874871408412,3.7332713069613477)-- (-1.6115777718049742,-3.7364172725503857);
\draw [line width=1.pt] (-1.6115777718049742,-3.7364172725503857)-- (4.232075097435111,2.3123788361773547);
\begin{scriptsize}
\draw[color=gray] (0.5,0.9) node {$\mathcal{D}$};
\draw[color=gray] (-0.5,1.7) node {$\mathcal{D}_1$};
\draw[color=gray] (-3.1,3.5) node {$\mathcal{D}_2$};
\draw [fill=blue] (1.9999983198412434,0.0019443136614628858) circle (1.5pt);
\draw[color=blue] (2.3,0) node {$A_0$};
\draw [fill=blue] (0.6155679333156783,1.4271844019620674) circle (1.5pt);
\draw[color=blue] (0.67,1.7) node {$A_1$};
\draw [fill=blue] (-1.6195564146676258,0.8801041549647871) circle (1.5pt);
\draw[color=blue] (-1.87,1.14) node {$A_2$};
\draw [fill=blue] (-1.6165088442781896,-0.8832501205538245) circle (1.5pt);
\draw[color=blue] (-1.85,-1.1) node {$A_3$};
\draw [fill=blue] (0.6204990057888937,-1.4259827500344926) circle (1.5pt);
\draw[color=blue] (0.75,-1.7) node {$A_4$};
\draw [fill=red] (4.232075097435111,2.3123788361773547) circle (1.5pt);
\draw[color=red] (4.47,2.5) node {$B_1$};
\draw [fill=red] (-1.6244874871408412,3.7332713069613477) circle (1.5pt);
\draw[color=red] (-1.67,4.03) node {$B_3$};
\draw [fill=red] (-5.236063578787058,-0.005090279250499996) circle (1.5pt);
\draw[color=red] (-5.55,0) node {$B_0$};
\draw [fill=red] (-1.6115777718049742,-3.7364172725503857) circle (1.5pt);
\draw[color=red] (-1.71,-4) node {$B_2$};
\draw [fill=red] (4.240053740297761,-2.304142591337816) circle (1.5pt);
\draw[color=red] (4.5,-2.42) node {$B_4$};
\end{scriptsize}
\end{tikzpicture}
    \caption{Two pentagons are inscribed in $\mathcal D_1$ and $\mathcal D_2$, and circumscribed about $\mathcal D$ homothetic to both $\mathcal D_1$ and $\mathcal D_2$.}
    \label{fig:homellips}
\end{figure}

\begin{corollary}\label{cor:pentagonpentagram}
Let $\mathcal D_1$, $\mathcal D_2$, and $\mathcal D$ be three homothetic ellipses. Suppose that $P_1$ is a pentagon of density $1$ inscribed in $\mathcal D_1$ and circumscribed about $\mathcal D$, and that $P_2$ is a pentagon of density $2$ inscribed in $\mathcal D_2$ and circumscribed about $\mathcal D$. Then the total areas of the power circles associated with $P_1$ and $P_2$ are equal. (See Figure \ref{fig:homellips}.)
\end{corollary}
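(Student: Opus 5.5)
The plan is to apply Corollary~\ref{cor:mediansum} twice, with the common inellipse $\mathcal D$ playing the role of the inellipse in both applications, and then invoke the numerical coincidence of Example~\ref{ex:equalms}.

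Let $\alpha,\beta$ be the semi-axes of $\mathcal D$. Since $\mathcal D_1$ and $\mathcal D$ are homothetic, $P_1$ is a $5$-gon of density $1$ inscribed in and circumscribed about a pair of homothetic ellipses whose inellipse is $\mathcal D$. Corollary~\ref{cor:mediansum} therefore gives
\[
\sum_{k=0}^{4}m_k(P_1)^2=\frac{5}{2}(\alpha^2+\beta^2)\left(\sec\frac{\pi}{5}+1\right)^2 .
\]
In the same way, $P_2$ is a $5$-gon of density $2$ inscribed in $\mathcal D_2$ and circumscribed about the same $\mathcal D$, so
\[
\sum_{k=0}^{4}m_k(P_2)^2=\frac{5}{2}(\alpha^2+\beta^2)\left(\sec\frac{2\pi}{5}-1\right)^2 .
\]

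By Example~\ref{ex:equalms}, both bracketed factors equal $\sqrt5$, so both sums equal $\frac{25}{2}(\alpha^2+\beta^2)$. Multiplying by $\pi$ converts sums of squared medians into total power-circle areas, so the two total areas coincide.

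The only point needing care is the hypothesis check for Corollary~\ref{cor:mediansum}. That corollary rests on writing each Poncelet polygon as the affine image, under the map $\mathrm{diag}(\alpha,\beta)$ in the principal axes of $\mathcal D$, of a regular $\{5/q\}$ of unit inradius. So I must check that the outer ellipse for density $q$ is forced to be the image of the circle of radius $\sec(q\pi/5)$.

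This follows because the affine map sends $\mathcal D$ to the unit circle and $\mathcal D_i$ to a concentric circle. Poncelet closure with density $q$ for concentric circles forces the outer radius to be $\sec(q\pi/5)$, by Theorem~\ref{thm:reqpolpq}. Hence the same $\alpha,\beta$ enter both formulas, and the existence of $\mathcal D_1$ and $\mathcal D_2$ as stated is consistent.
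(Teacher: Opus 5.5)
Your proposal is correct and is exactly the paper's argument: you apply Corollary~\ref{cor:mediansum} to $P_1$ and $P_2$ with the common inellipse $\mathcal D$, so the factor $\frac52(\alpha^2+\beta^2)$ is shared, and then Example~\ref{ex:equalms} gives $\sec(\pi/5)+1=\sec(2\pi/5)-1=\sqrt5$, so both totals equal $\frac{25\pi}{2}(\alpha^2+\beta^2)$. Your extra check that the outer ellipse must be the image of the circle of radius $\sec(q\pi/5)$ only unpacks what Corollary~\ref{cor:mediansum} already assumes, and, like the paper, it implicitly reads ``homothetic'' as concentric.
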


\begin{proof}
Follows from Corollary \ref{cor:mediansum} and Example \ref{ex:equalms}.
\end{proof}

The affine averaging principle developed here shows that rotational symmetry and affine \,\,geometry together provide a simple mechanism for generating quadratic invariants of odd Poncelet polygons. It would be interesting to determine whether analogous affine averaging principles exist for other geometric quantities arising in Poncelet families.

\vspace{0.5cm}
\noindent
\thanks{\textbf{Acknowledgments}. The author is grateful to his wife Saba Fatema for her continuous support, encouragement and helpful discussions.

\bibliographystyle{amsplain}  
\bibliography{references}
\end{document}